\documentclass[12pt]{article}
\usepackage{tipa}
\usepackage[all,import]{xy}
\usepackage{graphicx,color}
\usepackage{amsmath, amsthm}
\usepackage{amsbsy}
\usepackage{amssymb}
\usepackage{cases}
\usepackage{enumerate}
\usepackage{bm}
\usepackage[colorlinks,linkcolor=black,anchorcolor= black,citecolor= black,urlcolor=black]{hyperref}
\usepackage{multirow}
\usepackage[all,import]{xy}
\setcounter{MaxMatrixCols}{20}

\usepackage{times}
\usepackage{mathptmx}

\usepackage{natbib} 
\bibpunct{(}{)}{;}{a}{}{,} 

\topmargin-1.2truecm \oddsidemargin 0pt \evensidemargin 0pt
\textwidth 15.6truecm \textheight 23truecm \baselineskip 18pt plus 10pt

\newtheorem{corollary}{Corollary}

\newtheorem{theorem}{Theorem}
\newtheorem{lemma}{Lemma}

\def\pr{\text{P}}

\DeclareMathOperator\logit{\textnormal{logit}}
\DeclareMathOperator\Vol{\textnormal{Vol}}
    
\makeatletter
\newcommand*{\rom}[1]{\expandafter\@slowromancap\romannumeral #1@}
\makeatother

\renewcommand{\d}[1]{\ensuremath{\operatorname{d}\!{#1}}}

\begin{document}

\setlength{\baselineskip}{2\baselineskip}

\title{\bf The Differential Geometry of Homogeneity Spaces Across Effect Scales}
\author{Peng Ding~\footnote{Department of Statistics, University of California at Berkeley, CA, USA. 
Email: \url{pengdingpku@gmail.com}} 
~and Tyler J. VanderWeele~\footnote{Department of Epidemiology and Biostatistics, Harvard T. H. Chan School of Public Health, Boston, MA, USA. 
Email: \url{tvanderw@hsph.harvard.edu}}}
\date{}

\maketitle

\begin{abstract}
If an effect measure is more homogeneous than others, then its value is more likely to be stable across different subgroups or subpopulations. Therefore, it is of great importance to find a more homogeneous effect measure that allows for transportability of research results. For a binary outcome, applied researchers often claim that the risk difference is more heterogeneous than the risk ratio or odds ratio, because they find, based on evidence from surveys of meta-analyses, that the null hypotheses of homogeneity are rejected more often for the risk difference than for the risk ratio and odds ratio. However, the evidence for these claims are far from satisfactory, because of different statistical powers of the homogeneity tests under different effect scales. For binary treatment, covariate and outcome, we theoretically quantify the homogeneity of different effect scales. Because when homogeneity holds the four outcome probabilities lie in a three dimensional sub-space of the four dimensional space, we can use results from differential geometry to compute the volumes of these three dimensional spaces to compare the relative homogeneity of the risk difference, risk ratio, and odds ratio. We demonstrate that the homogeneity space for the risk difference has the smallest volume, and the homogeneity space for the odds ratio has the largest volume, providing some further evidence for the previous claim that the risk difference is more heterogeneous than the risk ratio and odds ratio.

\medskip
\noindent {\bf Key Words}: Additive interaction; Binary outcome; Heterogeneity; Multiplicative interaction; Odds ratio; Risk difference; Risk ratio; Scale-dependence
\end{abstract}

\section{Introduction}

Practitioners are often interested in generalizing their finding across different subgroups or populations. This transportability problem \citep{pearl2014external, keiding2016perils} relies on homogeneity of the effect measure across these subgroups or subpopulations. However, homogeneity, or equivalently the absence of interaction, is scale dependent as documented in the classical psychology literature \citep{krantz1971conjoint, bogartz1976meaning, loftus1978interpretation} and statistics literature \citep{box1964analysis, cox1984interaction, darroch1994synergism, de2007interpretation}. For recent discussion, see \citet{wagenmakers2012interpretation}, \citet{ding2016randomization} and \citet{keiding2016perils}. Therefore, if the treatment effect is more homogeneous under one scale, then it would be preferable to use this scale as a measure of the treatment effect for transportability across populations. For a binary outcome, according to \citet{poole2015risk}'s review, clinical reports and general guidelines often assert that the treatment effect is more heterogeneous under the risk difference scale than under the risk ratio and odd ratio scales. These claims in the literature are based on surveys of meta-analyses showing that rejections of the null hypotheses of homogeneity, or equivalently the absence of interaction, happen more often under the risk difference scale than under the risk ratio and odds ratio scales. However, the evidence for these claims are not satisfactory, because different rejection rates of the null hypotheses of homogeneity may simply due to different statistical powers of tests. This is true even when there is no effect in one of the two subpopulations so that there is effect heterogeneity on all scales and arguably to the same degree \citep{poole2015risk}.

Recognizing this gap in the literature, for binary treatment, covariate and outcome, we use results from differential geometry to quantitatively compare the relative homogeneity of the risk difference, risk ratio and odds ratio. Our comparison has three aspects. First, homogeneity, or no interaction, restricts the four outcome probabilities to be in the three dimensional space. For different effect measures, these spaces have different domains with different volumes. We find that the volume of the domain is the smallest for the risk difference, and is the largest for the odds ratio. This finding quantifies more exactly the corresponding fact that once three outcome probabilities are fixed, it is always possible to find a fourth that would lead to odds ratio homogeneity but that it is not always possible to do so for the risk ratio and risk difference scales. 
Second, although these three spaces of homogeneity have measure zero in the four dimensional space, they have positive volumes in the three dimensional space. Our calculation shows that the volume of the subspace is the smallest for the risk difference, and is the largest for the odds ratio. This theoretical result demonstrates that odds ratio homogeneity holds for relatively more values of the outcome probabilities than risk ratio homogeneity, which further holds for more values of the outcome probabilities than risk difference homogeneity. Third, we compare the volumes of the acceptance regions of Wald-type tests against the null hypotheses of homogeneity for the risk difference, risk ratio and odds ratio, finding that the acceptance region has the smallest volume for the risk difference, and has the largest volume for the odds ratio. This result thus partially also explains the reason why the null hypotheses of homogeneity were rejected more often for the risk difference than for the odds ratio, a phenomenon that arises repeatedly in surveys of meta-analyses \citep{katerndahl1999variability, engels2000heterogeneity, sterne2001funnel, deeks2002issues, deeks2003effect}.

The remainder of the paper proceeds as follows.  
Section \ref{sec::notation} introduces the notation and definitions for homogeneity and interaction, and states our evidence for the claim that the risk difference seems more heterogeneous than the risk ratio and odds ratio. For the risk difference, risk ratio and odds ratio,
Section \ref{sec::domains} compares the volumes of the domains of the spaces of homogeneity of the outcome probabilities;
Section \ref{sec::volumes} compares the volumes of the spaces of homogeneity of the outcome probabilities;
Section \ref{sec::stat-inference} compares the volumes of the acceptance regions of the Wald-type tests for the null hypotheses of homogeneity.  
We conclude in Section \ref{sec::discussion}.

\section{Notation, Definitions, and Conclusion}
\label{sec::notation}

Assume that we have binary treatments or covariates $G$ and $E$, and a binary outcome $Y.$ Define the outcome probability as $p_{ge} = \pr(Y=1\mid G=g,E=e)$ for $g,e=0,1.$ In some cases, $G$ and $E$ may be the gene and environment exposures, and we are interested in gene-environment interaction. In other cases, $G$ may be a binary covariate and $E$ a binary exposure or treatment, and we are interested in the differential treatment effects of $E$ on the outcome $Y$ given different levels of $G.$
Based on the four outcome probabilities $ \mathcal{P} =   (p_{11}, p_{10}, p_{01}, p_{00})$, we introduce concepts of interaction under different effect scales \citep{darroch1994synergism, vanderweele2015explanation}. For instance, we say that there is no interaction between $G$ and $E$ on the risk difference scale if
$$
p_{11} - p_{10}- p_{01} + p_{00} = 0;
$$
there is no interaction between $G$ and $E$ on the risk ratio scale if 
$$
\frac{p_{11} p_{00} }{  p_{10}p_{01} }  = 1;
$$
there is no interaction between $G$ and $E$ on the odds ratio scale if
$$
\frac{   p_{11}/(1-p_{11}) \times p_{00}/(1-p_{00})  }{  p_{10}/(1-p_{10}) \times p_{01}/(1-p_{01})    } = 1.
$$
We say that the treatment effect of $E$ on the outcome $Y$ is homogeneous across groups of $G$, if there is no interaction between $G$ and $E$. Clearly, the definition of homogeneity is scale dependent. It is possible that the treatment effect of $E$ on the outcome $Y$ is heterogeneous under all scales; it is also possible that the treatment effect of $E$ on the outcome $Y$ is homogeneous under one scale, but heterogeneous under another scale.

One important question is under which scale the treatment effect is more homogeneous. This question has profound impact in practice, because under a more homogeneous scale the treatment effect is more likely to be transportable to other subgroups or subpopulations. Below, we quantitatively compare the three effect scales in three ways, providing theoretical evidence for the claim that the risk difference is more heterogeneous than the risk ratio and odds ratio.

\section{Domains of the Homogeneity Spaces}
\label{sec::domains}

Assume $\mathcal{P}  \in (0, p)^4$, which lies in a four dimensional space. If we do not impose any restrictions on the ranges of the probabilities, then we can take $p=1$; if we consider only rarer outcomes, then we can take $p$ to be a smaller number, e.g., $p=0.1$.

In the absence of interaction on the risk difference scale, $p_{00} = -p_{11} + p_{10} +p_{01}$ is a deterministic function of $p_{11},p_{10}$ and $p_{01}.$  Because the probabilities are bounded between $0$ and $p$, this deterministic relationship restricts $p_{11},p_{10}$ and $p_{01}$ to be within the following region:
$$
\mathcal{F}_a(p) =  \{  (p_{11},p_{10}, p_{01}) \in (0, p)^3:    
\max(p_{10}+p_{01}-p,0) \leq p_{11} \leq  p_{10}+p_{01} \},
$$

In the absence of interaction on the risk ratio scale, $p_{00} = p_{10}p_{01} / p_{11}$ is a deterministic function of $p_{11},p_{10}$ and $p_{01}$, which have the following constraints:
$$
\mathcal{F}_m(p) =  \{  (p_{11},p_{10}, p_{01}) \in (0, p)^3:  
   p_{01} \leq  p_{11}p/p_{10}   \}.
$$

In the absence of interaction on the odds ratio scale, the deterministic relationship $p_{00}/(1-p_{00}) = p_{10} /(1-p_{10}) \times p_{01} / (1-p_{01}) / \{ p_{11} / (1-p_{11}) \}$ restricts $p_{11},p_{10}$ and $p_{01}$ to be within the following region:
$$
\mathcal{F}_o(p) = 
\left\{   (p_{11},p_{10}, p_{01}) \in (0, p)^3:  
    \frac{(1-p) p_{10}p_{01} }{   (1-p) p_{10}p_{01} +p(1-p_{10})(1-p_{01})  }  <  p_{11}    \right\}  .
$$

We can see that the homogeneity spaces have different domains for different effect scales.
When $p=1$, there is no restrictions on the outcome probabilities on the odds ratio scale, and $\mathcal{F}_o(1)$ is simply $(0,1)^3$; however, the restrictions on the outcome probabilities exist for any values of $p$ on the risk difference and risk ratio scales.

Define the volumes of the domains $\mathcal{F}_a(p) ,\mathcal{F}_m(p) $ and $\mathcal{F}_o(p) $ as $F_a (p), F_m(p)  $ and $F_o(p) $.
We first argue, based on the volumes of these domains, that the risk difference is more heterogeneous than the risk ratio and odds ratio.

\begin{theorem}
\label{thm::domain}
The three dimensional volume of the domain $\mathcal{F}_a(p)$ is 
$$
F_a (p) = 2/3\times p^3;
$$
the three dimensional volume of the domain $\mathcal{F}_m(p)$ is 
$$
F_m(p) = 3/4\times p^3;
$$
the three dimensional volume of the domain $\mathcal{F}_o(p) $ cannot be easily calculated explicitly, and the numerical values of $F_o(p)/p^3$ are tabulated at different values of $p$ below:
$$
\begin{pmatrix}
p&0.1&0.2&0.3&0.4&0.5&0.6&0.7&0.8&0.9&1\\
F_o(p)/p^3&0.75 &0.76 &0.76& 0.77& 0.77& 0.78& 0.79& 0.81 &0.85 &1.00
\end{pmatrix} . 
$$
\end{theorem}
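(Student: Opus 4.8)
The plan is to compute each of the three volumes directly as an iterated triple integral over the unit cube $(0,p)^3$, exploiting the fact that in every case the constraint can be solved for $p_{11}$ as the innermost variable. For $\mathcal{F}_a(p)$ the region is $\{\max(p_{10}+p_{01}-p,0)\le p_{11}\le p_{10}+p_{01}\}\cap(0,p)^3$; after clipping $p_{11}$ to $(0,p)$ the length of the $p_{11}$-slice is a piecewise-linear function of $(p_{10},p_{01})$, so $F_a(p)=\int_0^p\int_0^p \ell_a(p_{10},p_{01})\,\d{p_{10}}\,\d{p_{01}}$, and by rescaling $p_{10}=pu$, $p_{01}=pv$ this is $p^3$ times a fixed integral over $(0,1)^2$. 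Splitting the square along the diagonal $u+v=1$ and evaluating the two polynomial pieces gives $2/3$. For $\mathcal{F}_m(p)$ the constraint $p_{01}\le p\,p_{11}/p_{10}$ is equivalent to $p_{11}\ge p_{10}p_{01}/p$ (using $p_{10},p_{01}<p$ so the bound is below $p$), so $F_m(p)=\int_0^p\int_0^p\big(p-\tfrac{p_{10}p_{01}}{p}\big)\,\d{p_{10}}\,\d{p_{01}}=p^3-\tfrac1p\cdot\tfrac{p^2}{2}\cdot\tfrac{p^2}{2}=3/4\,p^3$.

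For $\mathcal{F}_o(p)$ I would again integrate out $p_{11}$ first: the slice length is $p$ minus the lower threshold $g(p_{10},p_{01})=\dfrac{(1-p)p_{10}p_{01}}{(1-p)p_{10}p_{01}+p(1-p_{10})(1-p_{01})}$, so $F_o(p)=p^3-\int_0^p\int_0^p g(p_{10},p_{01})\,\d{p_{10}}\,\d{p_{01}}$. After the substitution $p_{10}=pu,\ p_{01}=pv$ this becomes $p^3$ times $1-\iint_{(0,1)^2} g(pu,pv)\,\d u\,\d v$, and I would carry out the inner integral over $u$ by partial fractions (the integrand is a rational function of $u$ of the form $(a u)/(a u + b(1-u))$ with $a,b$ depending on $v$), leaving a one-dimensional integral in $v$ that I would integrate once more analytically where possible and otherwise evaluate numerically on the grid $p=0.1,\dots,1$ reported in the table. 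The limiting value $F_o(1)/1=1$ is immediate since $g\equiv 0$ when $p=1$.

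The final comparison — that $F_a(p)<F_m(p)<F_o(p)$ for all $p\in(0,1]$ — follows once $F_a/p^3=2/3<3/4=F_m/p^3$ is established in closed form and $F_o(p)/p^3\ge 3/4$ is shown; for the last inequality I would argue that $g(p_{10},p_{01})\le p_{10}p_{01}/p$ pointwise on $(0,p)^2$ (equivalently $(1-p)p_{10}p_{01}+p(1-p_{10})(1-p_{01})\ge (1-p)\cdot p$ after clearing denominators, which reduces to $(p-p_{10})(p-p_{01})\ge 0$ — wait, one must check the algebra, but it collapses to a product of nonnegative factors), so that the subtracted integral for $F_o$ is no larger than that for $F_m$.

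The main obstacle is the odds-ratio volume: the integrand $g$ is a genuine rational function with no elementary antiderivative after the first integration for general $p$, so the "cannot be easily calculated explicitly" claim must be backed up by actually performing the first integration (via partial fractions, producing logarithmic terms in $v$ and $p$) and then observing that the remaining $v$-integral mixes polynomial and $\log$ terms in a way that does not telescope; the numerical table then requires a careful, reproducible quadrature. A secondary but easy point is justifying the clipping in $\mathcal{F}_a$ and $\mathcal{F}_m$ — checking that the stated upper/lower bounds for $p_{11}$ indeed stay within $(0,p)$ on the relevant portion of the $(p_{10},p_{01})$-square, so the slice-length formulas are correct.
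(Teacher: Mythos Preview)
Your proposal is correct and follows essentially the same route as the paper: for $F_a$ and $F_m$ you integrate out $p_{11}$ first, rescale by $p$, and evaluate the resulting elementary integrals (the paper likewise obtains $F_a$ as a by-product of the $V_a$ computation by splitting along $y+z=1$, and computes $F_m(1)=\int_0^1\int_0^1(1-yz)\,\d y\,\d z=3/4$); for $F_o$ the paper simply records the triple integral and evaluates it numerically, while you go slightly further by sketching a partial-fractions reduction of the inner integral, but the substance is the same. Your added pointwise inequality $g(p_{10},p_{01})\le p_{10}p_{01}/p$, which indeed reduces to $(p-p_{10})(p-p_{01})\ge 0$, is a nice extra that the paper does not prove, though it is not part of the theorem statement itself.
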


We present the formula of $F_o(p)$ in the Appendix, and the numerical values of $F_o(p)$ are computed via both Monte Carlo and numerical integration. Note that $p^3$ is the reference volume of the region $(0,p)^3$. From the above theorem, at $p\leq 0.1$, the volumes of the domains are both $3/4$ under the risk ratio and odds ratio scales, because $p\leq 0.1$ corresponds to the rare outcome cases, and the risk ratio and odds ratio are very close. At $p=1$, the volumes of the domains are $2/3, 3/4$ and $1$ under the risk difference, risk ratio and odds ratio, respectively. 

\section{Volumes of the Homogeneous Spaces}
\label{sec::volumes}

In the absence of interaction on the risk difference scale, $\mathcal{P}$ lies in the following three dimensional space:
\begin{eqnarray*}
S_a(p) = \left\{   \mathcal{P}\in    (0, p)^4 : 
p_{11} + p_{00}  = p_{10} + p_{01}   \right\};
\end{eqnarray*}
in the absence of interaction on the risk ratio scale, $\mathcal{P}$ lies in the following three dimensional space:
\begin{eqnarray*} 
S_m(p) =  \left\{  \mathcal{P} \in    (0, p)^4 : 
p_{11} p_{00}  = p_{10}  p_{01}   \right\};
\end{eqnarray*}
in the absence of interaction on the odds ratio scale, $\mathcal{P}$ lies in the following three dimensional space:
\begin{eqnarray*}
S_o(p) =  \left\{   \mathcal{P} \in    (0, p)^4 : 
\frac{p_{11}}{1-p_{11}} \times \frac{p_{00}}{1-p_{00}}  = \frac{p_{10}}{1-p_{10}} \times  \frac{p_{01}}{1-p_{01}}   \right\}  .
\end{eqnarray*}

In the four dimensional space $(0,p)^4$, the three dimensional sub-spaces $S_a(p) , S_m(p) $ and $S_o(p) $ all have measure zero. However, it is reasonable to compare the three dimensional volumes, $V_a(p), V_m(p)$ and $V_o(p)$, of the three sub-spaces of the four dimensional space. See the Appendix for more technical discussion about the volume of a low dimensional space in a high dimensional space.

\begin{theorem}
\label{thm::three-dim-volume}
The three dimensional volume of the space $S_a(p)$ is
\begin{eqnarray*}
V_a(p)  = 4/3 \times p^3= 1.33\times p^3;
\end{eqnarray*}
the three dimensional volume of the space $S_m(p)$ is
\begin{eqnarray*}
V_m(p)  = \frac{1}{3} \left\{ \sqrt{2}+\log(1+\sqrt{2})  \right\}^2 \times p^3 = 1.76\times p^3;
\end{eqnarray*}
the three dimensional volume of the space $S_a(p)$ has a complicated form presented in the Appendix,  and $V_o(p)/p^3$ is tabulated at different values of $p$ below:
$$
\begin{pmatrix}
 p& 0.1 & 0.2 & 0.3 & 0.4 & 0.5 & 0.6 & 0.7 & 0.8 & 0.9 & 1 \\ 
  V_o(p)/p^3 & 1.76 & 1.76 & 1.76 & 1.77 & 1.78 & 1.79 & 1.81 & 1.85 & 1.93 & 2.47 \\ 
\end{pmatrix}.
$$
\end{theorem}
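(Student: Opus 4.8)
The plan is to realise each homogeneity space as the graph of a function $p_{00}=g(p_{11},p_{10},p_{01})$ over the corresponding domain $\mathcal{F}_{\bullet}(p)$ of Theorem~\ref{thm::domain}, and to apply the classical surface-area formula: if $D\subset\mathbb{R}^{3}$ and $g:D\to\mathbb{R}$ is $C^{1}$, the graph $\{(\mathbf{x},g(\mathbf{x})):\mathbf{x}\in D\}\subset\mathbb{R}^{4}$ has three-dimensional volume $\int_{D}\sqrt{1+\|\nabla g\|^{2}}\,\mathrm{d}\mathbf{x}$. In each case $g$ is obtained by solving the relevant no-interaction equation for $p_{00}$, and the coordinate projection $(p_{11},p_{10},p_{01},p_{00})\mapsto(p_{11},p_{10},p_{01})$ is a bijection from $S_{\bullet}(p)$ onto $\mathcal{F}_{\bullet}(p)$, so the formula applies verbatim. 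For the risk difference this is immediate: $g=p_{10}+p_{01}-p_{11}$ is affine with $\nabla g=(-1,1,1)$, so $\sqrt{1+\|\nabla g\|^{2}}=2$ is constant and $V_a(p)=2F_a(p)=\tfrac{4}{3}p^{3}$ by Theorem~\ref{thm::domain}.

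For the risk ratio, $g=p_{10}p_{01}/p_{11}$, and the first step is the algebraic identity $1+\|\nabla g\|^{2}=(1+p_{10}^{2}/p_{11}^{2})(1+p_{01}^{2}/p_{11}^{2})$, so that the integrand equals $p_{11}^{-2}\sqrt{(p_{11}^{2}+p_{10}^{2})(p_{11}^{2}+p_{01}^{2})}$. Both the risk-ratio constraint and this integrand are invariant under the common rescaling $p_{ge}\mapsto p_{ge}/p$, so it suffices to compute $V_m(1)$ and then set $V_m(p)=p^{3}V_m(1)$. To evaluate $V_m(1)$ I would pass to the rank-one factorisation $(p_{11},p_{10},p_{01},p_{00})=(c,d,bc,bd)$, under which the constraint $p_{11}p_{00}=p_{10}p_{01}$ holds automatically, the parameter domain is $(c,d)\in(0,1)^{2}$ with $b\in(0,1/\max(c,d))$, and a short determinant computation gives the area element $\sqrt{(1+b^{2})(c^{2}+d^{2})}$. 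Integrating $b$ out produces an inner factor $F(1/\max(c,d))$ with $F(x)=\int_{0}^{x}\sqrt{1+t^{2}}\,\mathrm{d}t=\tfrac12\{x\sqrt{1+x^{2}}+\log(x+\sqrt{1+x^{2}})\}$; then exploiting the $c\leftrightarrow d$ symmetry together with the exact evaluation $\int_{0}^{d}\sqrt{c^{2}+d^{2}}\,\mathrm{d}c=d^{2}F(1)$ reduces everything to $V_m(1)=2F(1)\int_{1}^{\infty}u^{-4}F(u)\,\mathrm{d}u$. An integration by parts followed by the substitution $u\mapsto 1/w$, which turns $\int_{1}^{\infty}u^{-3}\sqrt{1+u^{2}}\,\mathrm{d}u$ into $\int_{0}^{1}\sqrt{1+w^{2}}\,\mathrm{d}w=F(1)$, then gives $\int_{1}^{\infty}u^{-4}F(u)\,\mathrm{d}u=\tfrac23 F(1)$ and hence $V_m(1)=\tfrac43 F(1)^{2}=\tfrac13\{\sqrt{2}+\log(1+\sqrt{2})\}^{2}$, since $F(1)=\tfrac12\{\sqrt{2}+\log(1+\sqrt{2})\}$.

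For the odds ratio the same graph construction applies, with $g$ the explicit rational solution of the odds-ratio equation for $p_{00}$; then $1+\|\nabla g\|^{2}$ is an explicit rational function of $(p_{11},p_{10},p_{01})$ whose square root has no elementary antiderivative over $\mathcal{F}_o(p)$. So I would only record the resulting triple-integral formula for $V_o(p)$, to be stated in the Appendix, and evaluate it both by Monte Carlo and by deterministic quadrature to populate the table. Two features serve as consistency checks: the odds-ratio constraint is not rescaling-invariant, which is exactly why $V_o(p)/p^{3}$ genuinely varies with $p$; and as $p\to 0$ each factor $p_{ge}/(1-p_{ge})$ collapses to $p_{ge}$, so both the constraint and the area element converge to their risk-ratio analogues and $V_o(p)/p^{3}\to V_m(1)\approx 1.76$, matching the $p=0.1$ entry.

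The main obstacle is the risk-ratio computation: recognising the factorisation of $1+\|\nabla g\|^{2}$, and then choosing coordinates --- the rank-one parametrisation above, or equivalently the scaling $p_{10}=p_{11}s$, $p_{01}=p_{11}t$ --- in which the remaining integral degenerates into one-dimensional pieces whose value is precisely $\tfrac12\{\sqrt{2}+\log(1+\sqrt{2})\}$. The risk-difference case is trivial once the graph formula is in hand, and the odds-ratio case is conceptually identical to the risk-ratio case but simply has no closed form, so the numerical table is the best available there.
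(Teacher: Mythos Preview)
Your proposal is correct and follows essentially the paper's approach: each volume is computed as the surface integral $\int\sqrt{1+\|\nabla g\|^{2}}$ over the graph $p_{00}=g(p_{11},p_{10},p_{01})$, with the same factorisation $(1+p_{10}^{2}/p_{11}^{2})(1+p_{01}^{2}/p_{11}^{2})$ for the risk ratio and the same reduction to a numerically evaluated triple integral for the odds ratio. Your rank-one parametrisation $(c,d,bc,bd)$ of $S_m$ is precisely the paper's graph parametrisation composed with its substitution $u=z/x$ (your $b$ is the paper's $u$, your $(c,d)$ its $(x,y)$), and your integration-by-parts evaluation of $\int_{1}^{\infty}u^{-4}F(u)\,\mathrm{d}u$ is a neat alternative to the paper's direct antiderivative computation of the equivalent integral over $y\in(0,1)$.
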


Again, for $p\leq 0.1$, the volumes of the homogeneity spaces for the risk ratio and odds ratio are the same; for any value of $p$, the volumes of the homogeneity spaces for the risk difference, risk ratio, and odds ratio are in an increasing order; at $p=1$, the three dimensional volumes of the homogeneity spaces are $1.33, 1.76$ and $2.47$ for the risk difference, risk ratio and odds ratio, respectively.

The differences of the three dimensional volumes are due to two reasons. First, the domains of the three spaces $\mathcal{F}_a(p) ,\mathcal{F}_m(p) $ and $\mathcal{F}_o(p) $ have different volumes as shown in Theorem \ref{thm::domain}. Second, the spaces $S_a(p) , S_m(p) $ and $S_o(p) $ have different ``curvatures'', which result in the differences of the volumes after taking account of the differences in the domains. We summarize this result in the following Corollary.

\begin{corollary} 
\label{coro::ratio}
The ratio $V_a(p) / F_a(p)$ is $2$; the ratio $V_m(p) / F_m(p)$ is
$$
\frac{4}{9}  \left\{    \sqrt{2} + \log(1+\sqrt{2})  \right\}^2 = 2.34;
$$
the ratio of $V_o(p) / F_o(p)$ is tabulated below:
$$
\begin{pmatrix}
 p& 0.1 & 0.2 & 0.3 & 0.4 & 0.5 & 0.6 & 0.7 & 0.8 & 0.9 & 1 \\ 
V_o(p) / F_o(p) &2.33 &2.32 &2.32 &2.31 &2.30 &2.29 &2.28& 2.27& 2.27& 2.47 
\end{pmatrix}.
$$
\end{corollary}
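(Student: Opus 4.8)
The plan is to read the three ratios straight off Theorems \ref{thm::domain} and \ref{thm::three-dim-volume}, while also recording the geometric reason behind them, namely that each homogeneity space is a graph over its domain, so that $V_\bullet(p)/F_\bullet(p)$ is exactly the average over $\mathcal{F}_\bullet(p)$ of the area‑magnification factor of that graph. Concretely, I would write each homogeneity constraint as $p_{00}=h_\bullet(p_{11},p_{10},p_{01})$: for the risk difference $h_a=-p_{11}+p_{10}+p_{01}$, for the risk ratio $h_m=p_{10}p_{01}/p_{11}$, and for the odds ratio the rational function obtained by solving the odds‑ratio equation for $p_{00}$. By the classical area formula for a graph, $V_\bullet(p)=\int_{\mathcal{F}_\bullet(p)}\sqrt{1+\lVert\nabla h_\bullet\rVert^2}\,\mathrm{d}p_{11}\,\mathrm{d}p_{10}\,\mathrm{d}p_{01}$, whereas $F_\bullet(p)$ is the Lebesgue measure of $\mathcal{F}_\bullet(p)$; hence $V_\bullet(p)/F_\bullet(p)$ is the mean value of $\sqrt{1+\lVert\nabla h_\bullet\rVert^2}$ on the domain, which is the ``average curvature correction'' alluded to in the paragraph preceding the corollary.

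For the risk difference, $\nabla h_a=(-1,1,1)$ is constant, so $\sqrt{1+\lVert\nabla h_a\rVert^2}=\sqrt{4}=2$ identically, and the mean value is $2$ for every $p$; equivalently one simply divides $V_a(p)=\tfrac43 p^3$ by $F_a(p)=\tfrac23 p^3$. For the risk ratio, dividing $V_m(p)=\tfrac13\{\sqrt2+\log(1+\sqrt2)\}^2 p^3$ by $F_m(p)=\tfrac34 p^3$ cancels both the $p^3$ and the overall scale, leaving $\tfrac49\{\sqrt2+\log(1+\sqrt2)\}^2\approx 2.34$; as an internal consistency check this should agree with the average of $\sqrt{1+\lVert\nabla h_m\rVert^2}$, where $\lVert\nabla h_m\rVert^2=p_{10}^2p_{01}^2/p_{11}^4+p_{01}^2/p_{11}^2+p_{10}^2/p_{11}^2$, which is precisely the integral already evaluated in the proof of Theorem \ref{thm::three-dim-volume}.

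For the odds ratio there is no closed form, so I would take the Appendix expressions for $V_o(p)$ and $F_o(p)$, form the quotient $V_o(p)/F_o(p)$ (the $p^3$ factors again cancel), and evaluate it by the same numerical integration and Monte Carlo routines used for the two preceding theorems, at $p=0.1,\dots,1$; rounding then gives the tabulated entries, with the $p=1$ value $2.47$ matching $V_o(1)/F_o(1)$ read directly off the two tables. The only point requiring care is numerical rather than analytic: since the earlier tables are quoted to two decimals, forming ratios of already‑rounded entries can be off in the last digit (for instance $1.76/0.75$ rounds to $2.35$, not the reported $2.33$), so the quotient must be computed from the unrounded formulas with sufficient precision. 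Beyond that bookkeeping, the corollary is an immediate consequence of Theorems \ref{thm::domain} and \ref{thm::three-dim-volume}, so I do not anticipate any real obstacle.
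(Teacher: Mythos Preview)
Your proposal is correct and matches the paper's approach: the paper does not give a separate proof of the corollary at all, treating it as the obvious division of the formulas in Theorems~\ref{thm::domain} and~\ref{thm::three-dim-volume}. Your graph--area interpretation is exactly what underlies the Appendix computations (Lemma~\ref{lemma::volume} and the infinitesimal volume elements $v_a,v_m,v_o$), and your caveat about recomputing the odds-ratio quotients from unrounded values rather than dividing the two-decimal table entries is well taken and worth keeping.
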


By comparing the ratio between the volumes of the three dimensional spaces and their domains, the risk difference is still the most heterogeneous.

%

\section{Heterogeneity Comparison Based on Statistical Inference}
\label{sec::stat-inference}

The above theoretical results discuss the properties of the effect scales ignoring issues of statistical inference.
In practice, there is another level of heterogeneity due to sampling varibility. With finite samples, we will never have exact no interaction under any effect scale. For $g,e=0,1$, let $\widehat{p}_{ge}$ be the sample analogue of $p_{ge}$, which has estimated asymptotic variance $\widehat{p}_{ge}(1-\widehat{p}_{ge})/n_{ge}$ with $n_{ge}$ being the sample size of group $(g,e)$. Taking this uncertainty into account, we can compare the volumes of the regions of $\widehat{\mathcal{P}} =  (\widehat{p}_{11}, \widehat{p}_{10}, \widehat{p}_{01}, \widehat{p}_{00}) \in [0,1]^4$, within which we fail to reject the following null hypotheses of homogeneity:
\begin{eqnarray*}
&H_a:& p_{11} + p_{00} = p_{10} + p_{01} ,\\
&H_m:& p_{11} p_{00}  = p_{10}  p_{01} ,\\
&H_o:& \frac{p_{11}}{1-p_{11}} \times \frac{p_{00}}{1-p_{00}}  = \frac{p_{10}}{1-p_{10}} \times  \frac{p_{01}}{1-p_{01}}. 
\end{eqnarray*}
We consider the Wald-type tests of significance level $\alpha$, with $C_\alpha = \{  \Phi^{-1}(\alpha/2) \}^2$ and $\logit(x)=\log\{x/(1-x) \}$. These three acceptance regions are
\begin{eqnarray*}
R_a&=& \left\{ \widehat{\mathcal{P}}  \in [0,1]^4  :  
\frac{   (\widehat{p}_{11}+\widehat{p}_{00}-\widehat{p}_{10}-\widehat{p}_{01} )^2 }
{\sum_{g,e=0,1}  \widehat{p}_{ge}(1-\widehat{p}_{ge})/n_{ge}}  
\leq C_\alpha     \right\} ,\\
R_m &=& \left\{  \widehat{\mathcal{P}}  \in [0,1]^4  :
 \frac{  (\log \widehat{p}_{11}+ \log \widehat{p}_{00}- \log \widehat{p}_{10}- \log \widehat{p}_{01} )^2}
 {\sum_{g,e=0,1}  (1-\widehat{p}_{ge})/(   n_{ge} \widehat{p}_{ge} ) }  
  \leq C_\alpha    \right\}  ,\\
R_o &=& \left\{   \widehat{\mathcal{P}}  \in [0,1]^4 :  
\frac{ ( \logit \widehat{p}_{11}+\logit \widehat{p}_{00}-\logit \widehat{p}_{10}-\logit \widehat{p}_{01} )^2 }
{\sum_{g,e=0,1}  1/\{  n_{ge} \widehat{p}_{ge}(1-\widehat{p}_{ge}) \} }
\leq C_\alpha    \right\} . 
\end{eqnarray*}

Although explicit formulas of these four dimensional volumes are not straightforward to obtain, it is easy to compute their numerical values by using Monte Carlo. Assume that $n_{ge}$'s have the same size $n$. Table \ref{tb::numerical-stat} shows the volumes of the regions $R_a, R_m$ and $R_o$ for different sample sizes, from which we can see that the acceptance region of the null hypothesis of homogeneity has the smallest volume for the risk difference, and has the largest volume for the odds ratio. This numerical result also partially explains why the rejection rates of the null hypotheses of homogeneity are higher for the risk difference than for the risk ratio and odds ratio, as found in the surveys of meta-analyses \citep{katerndahl1999variability, engels2000heterogeneity, sterne2001funnel, deeks2002issues, deeks2003effect, poole2015risk}.

\begin{table}  
\centering 
\caption{Volumes of acceptance regions $R_a, R_m$ and $R_o$ with different sample sizes}\label{tb::numerical-stat} 
\begin{tabular}{cccccc}
\hline 
 &  $n=100$ & $n=500$ & $n=2000$ & $n=5000$ &  $n=10000$ \\
 \hline 
$\Vol(R_a)$&  $0.214$ & $0.097$ & $0.049$ & $0.031$ & $0.022$ \\
$\Vol(R_m)$&  $0.246$& $0.107$ & $0.053$ & $0.034$ & $0.024$ \\
$\Vol(R_o)$& $0.253$ & $0.111$& $0.055$ & $0.035$ & $0.025$ \\
$\Vol(R_m)/\Vol(R_a)$&$1.148$& $1.106$ & $1.096$ & $1.093$ & $1.092$\\
$\Vol(R_o)/\Vol(R_a)$&$1.182$ & $1.151$ & $1.139$& $1.135$ & $1.134$\\
\hline 
\end{tabular}
\end{table}

\section{Discussion}
\label{sec::discussion}

It is often believed that the risk difference is more heterogeneous than the risk ratio and odds ratio. Previously, the evidence for this belief is based on the rejection rates of the tests against the null hypotheses of homogeneity for difference effect scales, which is not adequate either empirically or theoretically. Through theoretical calculations, we provide additional evidence for the claim that the risk difference is more heterogeneous than the risk ratio and odds ratio, by showing that the homogeneity space for the risk difference has the smallest volume of domain, the smallest three dimensional volume, and the smallest volume of the acceptance region based on the Wald-type test. 
However, to argue that in reality the risk difference is more heterogeneous using the theoretical calculations here we would effectively have to assume that the outcome probabilities are uniformly distributed, an assumption that may not correspond to the empirical distributions of the outcome probabilities encountered in practice. For practical problems, it may be more useful to gather some prior information about the distributions of the outcome probabilities, and compute the volumes of the domain or spaces weighted by these prior distributions. However, to do this, one would also have to specify the domains of the empirical settings under consideration (e.g., disciplines, exposures, outcomes, etc). A uniform distribution of outcome probability seems the only one natural choice a priori. 
However, empirical evidence, perhaps collected and compared across disciplines, might given further evidence as to which effect measure is the most heterogeneous.

\section*{Acknowledgment}

Peng Ding thanks Mr. Yi Xie in the Harvard Mathematics Department for helpful discussion on differential geometry. Tyler J. VanderWeele is funded by the U.S. National Institutes of Health grant ES017876.

\bibliographystyle{apalike}
\bibliography{cornfield}

\clearpage 
\appendix
\setcounter{page}{1}
\begin{center}
\title{\Large \bf Appendix}
\bigskip 

\date{}
\maketitle
\end{center}

\setcounter{equation}{0}
\setcounter{section}{0}
\setcounter{figure}{0}
\setcounter{example}{0}
\setcounter{proposition}{0}

\renewcommand {\theproposition} {A.\arabic{proposition}}
\renewcommand {\theexample} {A.\arabic{example}}
\renewcommand {\thefigure} {A.\arabic{figure}}
\renewcommand {\thetable} {A.\arabic{table}}
\renewcommand {\theequation} {A.\arabic{equation}}
\renewcommand {\thelemma} {A.\arabic{lemma}}
\renewcommand {\thesection} {A.\arabic{section}}

\section{A Lemma}

The following lemma is a known result in the literature, but we give an elementary proof.

\begin{lemma}\label{lemma::volume}
Assume that $(\alpha_1,\alpha_2, \ldots, \alpha_m)$ are $m$ linearly independent vectors in $R^n (m\leq n)$. Let $A$ be an $ n\times m$ matrix defined as $ A = [\alpha_1, \ldots, \alpha_m]$. The $m$-dimensional volume of the parallelepiped formed by $(\alpha_1, \ldots, \alpha_m)$ is $\sqrt{  \det(A^\top A) } $.
\end{lemma}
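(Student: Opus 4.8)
The plan is to prove the identity by induction on $m$, reducing via Gram--Schmidt orthogonalization to the trivial case of mutually orthogonal edge vectors. Write $\beta_1, \ldots, \beta_m$ for the Gram--Schmidt vectors obtained from $\alpha_1, \ldots, \alpha_m$, so that $\beta_i$ is the component of $\alpha_i$ orthogonal to $\mathrm{span}(\alpha_1, \ldots, \alpha_{i-1})$; linear independence guarantees every $\beta_i \neq 0$. This produces a factorization $A = BU$, where $B = [\beta_1, \ldots, \beta_m]$ has mutually orthogonal columns and $U$ is an $m \times m$ upper triangular matrix with unit diagonal, hence $\det U = 1$.

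The algebraic half is then immediate: $A^\top A = U^\top (B^\top B) U$, and $B^\top B = \mathrm{diag}(\|\beta_1\|^2, \ldots, \|\beta_m\|^2)$ because the $\beta_i$ are orthogonal, so taking determinants gives $\det(A^\top A) = (\det U)^2 \prod_{i=1}^m \|\beta_i\|^2 = \prod_{i=1}^m \|\beta_i\|^2$. It thus suffices to show that the $m$-dimensional volume of the parallelepiped equals $\prod_{i=1}^m \|\beta_i\|$.

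For that I would use the recursive ``base times height'' description of volume. Replacing $\alpha_m$ by $\alpha_m$ plus any vector in $W = \mathrm{span}(\alpha_1, \ldots, \alpha_{m-1})$ is a shear and leaves the $m$-dimensional volume unchanged; after this replacement the edge $\beta_m$ is orthogonal to the affine span containing the base parallelepiped spanned by $\alpha_1, \ldots, \alpha_{m-1}$, so the volume factors as the $(m-1)$-dimensional volume of that base times $\|\beta_m\|$. By the induction hypothesis the base has volume $\prod_{i=1}^{m-1} \|\beta_i\|$ (note that $\beta_1, \ldots, \beta_{m-1}$ are exactly the Gram--Schmidt vectors of $\alpha_1, \ldots, \alpha_{m-1}$), and the base case $m = 1$ is just $\|\alpha_1\| = \sqrt{\alpha_1^\top \alpha_1}$. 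Combining, the volume is $\prod_{i=1}^m \|\beta_i\| = \sqrt{\det(A^\top A)}$.

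The only genuine subtlety is justifying the base-times-height recursion and the shear-invariance of $m$-dimensional volume rigorously; everything else is routine linear algebra. The cleanest route is to fix, once and for all, the definition of the $m$-dimensional volume of a parallelepiped with edges $\alpha_1, \ldots, \alpha_m$ as the Lebesgue measure of the unit cube $[0,1]^m$ pushed forward by an isometric identification of $\mathrm{span}(\alpha_1, \ldots, \alpha_m)$ with $\mathbb{R}^m$ followed by the linear map $e_i \mapsto \alpha_i$; with that definition the shear-invariance and the base-times-height factorization both follow from the change-of-variables formula for the linear map in question, and the induction above goes through verbatim.
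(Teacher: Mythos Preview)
Your proof is correct and follows essentially the same Gram--Schmidt/QR route as the paper. The only difference is packaging: the paper \emph{defines} the $m$-dimensional volume of the parallelepiped as the product $\|\alpha_1\|\cdot\|P_1^\perp\alpha_2\|\cdots\|P_{1:(m-1)}^\perp\alpha_m\|$ (your $\prod_i\|\beta_i\|$) and then reads off the identity directly from the QR factorization $A=\Gamma U$ with $\Gamma$ orthonormal, whereas you derive that product formula from a more primitive notion of volume via the base-times-height induction and shear invariance. So your argument is slightly longer but also slightly more self-contained; the algebraic core is identical.
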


\begin{proof}[Proof of Lemma \ref{lemma::volume}.]
Let $C$ be the parallelepiped formed by $(\alpha_1, \ldots, \alpha_m)$. We use the following intuitive definition of the volume of a parallelepiped in a $m$-dim subspace of $n$-dim space:
$$
\text{vol}(C) = ||\alpha_1||\times ||  P_1^{\bot} \alpha_2 || \times \cdots \times  || P_{1:(m-1)}^\bot  \alpha_m ||,
$$
where $||\cdot||$ represents the $L-2$ norm, and $P_{1:(j-1)}^\bot \alpha_j$ is the projection of $\alpha_j$ onto the space orthogonal to the linear space spanned by $(\alpha_1, \cdots, \alpha_{j-1}).$
This definition is closely related to the Gram--Schmidt orthogonalization, or equivalently the QR decomposition of a matrix. We have
$$
A_{n\times m} = \Gamma_{n\times m}   U_{m\times m},
$$
where $\Gamma$ has orthonormal column vectors, and $U$ is an upper-triangular matrix. In the construction of the QR decomposition, the diagonal elements of $U$ are the lengths of $P_{1:(j-1)}^\bot \alpha_j$ for $j=1,\ldots,p$. Therefore, 
$$
\text{vol}(C) = \prod_{i=1}^p |  l_{ii}  |  = \det(L) = \sqrt{  \det(L^\top L)} = \sqrt{  \det(  L^\top \Gamma^\top  \Gamma L  )}
= \sqrt{ \det( A^\top A) }. 
$$
\end{proof}

\section{Computing $F_a(p)$ and $V_a(p)$}\label{sec::a-a}

For ease of notation, we use $(x,y,z,w) = (p_{11}, p_{10}, p_{01}, p_{00} ) $ and first assume $p=1$. We simply denote $S_a(1)$ and $V_a(1)$ by $S_a$ and $V_a$.  All vectors are column vectors. The set $S_a$ represents a mapping from a three dimensional set to the four dimensional space, written as 
$$  
\bm{f}_a(x,y,z) = (x,y,z, w=-x+y+z)^\top.
$$

First, we need to find the domain of the mapping $\bm{f}_a.$ Because $x,y,z, w = -x+y+z \in (0,1)$, the domain of $\bm{f}_a$ is
$$
\mathcal{F}_a = \{  (x,y,z):    \max(y+z-1,0) \leq x \leq \min(y+z,1) , 0\leq y\leq 1, 0\leq z\leq 1\}.
$$

Second, we need to find the infinitesimal volume spanned by the following three vectors corresponding to the directions of $(\d x,\d y,\d z)$:
\begin{eqnarray}\label{eq::basis-additive}
{ \partial \bm{f}_a \over  \partial x } \d x  = 
\begin{pmatrix}
1\\
0\\
0\\
-1
\end{pmatrix}\d x, \quad 
{ \partial \bm{f}_a \over  \partial y }  \d y  = 
\begin{pmatrix}
0\\
1\\
0\\
1
\end{pmatrix}\d y,\quad 
{ \partial \bm{f}_a \over  \partial z } \d z  =
\begin{pmatrix}
0\\
0\\
1\\
1
\end{pmatrix}\d z.
\end{eqnarray}
Let 
$$
A_a(x,y,z) = \left[{ \partial \bm{f}_a \over  \partial x }   , { \partial \bm{f}_a \over  \partial y } ,  { \partial \bm{f}_a \over  \partial z } \right] =
\begin{pmatrix}
1&0&0\\
0&1&0\\
0&0&1\\
-1&1&1
\end{pmatrix}.
$$
Lemma \ref{lemma::volume} implies that the infinitesimal volume spanned by the vectors in (\ref{eq::basis-additive}) is
\begin{eqnarray*}
v_a(x,y,z) \d x\d y\d z &=&\sqrt{ \det\{ A_a^\top(x,y,z) A_a(x,y,z) \}   } \d x \d y \d z \\
&=& \sqrt{    \det \left\{        
\begin{pmatrix}
1&0&0&-1\\
0&1&0&1\\
0&0&1&1
\end{pmatrix}
\begin{pmatrix}
1&0&0\\
0&1&0\\
0&0&1\\
-1&1&1
\end{pmatrix} \right\} 
     }   \d x \d y \d z \\
&=& \sqrt{   \det       
\begin{pmatrix}
2&-1&-1\\
-1&2&1\\
-1&1&2
\end{pmatrix}
 }   \d x \d y \d z \\
&=& 2      \d x \d y \d z .
\end{eqnarray*}

Therefore, the three dimensional volume of $S_a$ is 
\begin{eqnarray*}
V_a &=&  \iiint\limits_{\mathcal{F}_a}   v_a(x,y,z) \d x\d y\d z  = 2  \iiint\limits_{\mathcal{F}_a}  \d x\d y\d z
= 2\int_0^1 \int_0^1  \left\{     
\int_{  \max(y+z-1, 0)  }^{\min(y+z,1)} \d x
\right\} \d y \d z \\
&=& 2\int_0^1 \int_0^1  \left\{    
\min(y+z,1)  -  \max(y+z-1, 0)
\right\} \d y \d z \\
&=&2  \iint\limits_{  0<y,z<1, y+z<1 }  (y+z) \d y \d z
+ 2  \iint\limits_{  0<y,z<1, y+z>1 }  (2-y-z) \d y \d z\\
&=&2\int_0^1   \left\{  \int_0^{1-y}    (y+z)\d z   \right\}  \d y
+ 2\int_0^1 \left\{   \int_{1-y}^1   (2-y-z) \d z  \right\} \d y\\
&=& 2\int_0^1   \left\{  y(1-y) + \frac{1}{2}(1-y)^2  \right\}  \d y
+ 2\int_0^1 \left[   2y - y^2 - \frac{1}{2}\{  1- (1-y)^2   \}^2  \right]  \d y\\
&=& 4/3,
\end{eqnarray*}
where the last line follows from integrals of simple polynomials.

If the probabilities are bounded by general $p\in (0,1]$, then the domain becomes
$$
\mathcal{F}_a(p) =  \{  (x,y,z):    \max(y+z-p,0) \leq x \leq \min(y+z,p) , 0\leq y\leq p, 0\leq z\leq p\},
$$
and the volume becomes
$$
V_a(p) = 2 \iiint\limits_{\mathcal{F}_a (p)}    \d x\d y\d z .
$$
Applying the transformations $x'=x/p, y'=y/p$ and $z'=z/p$, the above integral becomes
$
V_a(p) = p^3 V_a(1).
$

From the above calculation, we know that $F_a(p) = V_a(p)/2 =  2/3\times p^3$.

\section{Computing $F_m(p)$ and $V_m(p)$}\label{sec::a-m}

Again we use $(x,y,z,w) = (p_{11}, p_{10}, p_{01}, p_{00} ) $, and first assume $p=1$. We simply denote $S_m(1)$ and $V_m(1)$ by $S_m$ and $V_m$. The set $S_m$ represents a mapping from a three dimensional set to the four dimensional space, written as 
$$  
\bm{f}_m(x,y,z) = (x,y,z, w= yz/x)^\top.
$$

First, we need to find the domain of the mapping $\bm{f}_m.$ Because $x,y,z, w =yz/x \in (0,1)$, the domain of $\bm{f}_m$ is
$$
\mathcal{F}_m = \{  (x,y,z):    yz < x < 1 , 0\leq y\leq 1, 0\leq z\leq 1\}.
$$
For convenience in later calculation, we express $\mathcal{F}_m$ in the following form:
$$
\mathcal{F}_m = \{  (x,y,z):    0\leq x\leq 1, 0\leq y\leq 1, 0< z < \min(x/y, 1)\}.
$$
It is straightforward to show that the above two forms of $\mathcal{F}_m $ are equivalent.

Second, we need to find the infinitesimal volume spanned by the following three vectors corresponding to the directions of $(\d x,\d y,\d z)$:
\begin{eqnarray}\label{eq::basis-m}
{ \partial \bm{f}_m \over  \partial x } \d x  = 
\begin{pmatrix}
1\\
0\\
0\\
-yz/x^2
\end{pmatrix}\d x, \quad 
{ \partial \bm{f}_m \over  \partial y }  \d y  = 
\begin{pmatrix}
0\\
1\\
0\\
z/x
\end{pmatrix}\d y,\quad 
{ \partial \bm{f}_m \over  \partial z } \d z  =
\begin{pmatrix}
0\\
0\\
1\\
y/x
\end{pmatrix}\d z.
\end{eqnarray}
Let 
$$
A_m(x,y,z) = \left[{ \partial \bm{f}_m \over  \partial x }   , { \partial \bm{f}_m \over  \partial y } ,  { \partial \bm{f}_m \over  \partial z } \right] =
\begin{pmatrix}
1&0&0\\
0&1&0\\
0&0&1\\
-yz/x^2& z/x& y/x
\end{pmatrix}.
$$
Lemma \ref{lemma::volume} implies that the infinitesimal volume spanned by the vectors in (\ref{eq::basis-m}) is
\begin{eqnarray*}
v_m(x,y,z) \d x\d y\d z &=&\sqrt{ \det\{ A_m^\top(x,y,z) A_m(x,y,z) \}   } \d x \d y \d z \\
&=& \sqrt{    \det \left\{        
\begin{pmatrix}
1&0&0&-yz/x^2\\
0&1&0&z/x\\
0&0&1&y/x
\end{pmatrix}
\begin{pmatrix}
1&0&0\\
0&1&0\\
0&0&1\\
-yz/x^2& z/x& y/x
\end{pmatrix} \right\} 
     }   \d x \d y \d z \\
&=& \sqrt{   \det       
\begin{pmatrix}
1+y^2z^2/x^4 & -yz^2/x^3 & - y^2z/x^3\\
-yz^2/x^3 & 1+z^2/x^2 & yz/x^2\\
-y^2z/x^3&yz/x^2&1+y^2/x^2
\end{pmatrix}
 }   \d x \d y \d z \\
&=& \sqrt{      (1+y^2/x^2) (1+z^2/x^2)       }      \d x \d y \d z .
\end{eqnarray*}

Therefore, the three dimensional volume of $S_m$ is 
\begin{eqnarray*}
V_m &=&    \iiint\limits_{\mathcal{F}_m} v_m(x,y,z) \d x\d y\d z 
=\iiint\limits_{\mathcal{F}_m}  \sqrt{      (1+y^2/x^2) (1+z^2/x^2)       }      \d x \d y \d z \\
&=& \int_0^1 \int_0^1   \sqrt{1+y^2/x^2}  \left\{
\int_0^{   \min(x/y, 1) }  \sqrt{1+z^2/x^2} \d z 
\right\}     \d x \d y  \\
&\stackrel{u=z/x}{=}&  \int_0^1 \int_0^1   \sqrt{x^2+y^2}  \left\{
\int_0^{   \min(1/y, 1/x) }  \sqrt{1+u^2} \d u 
\right\}     \d x \d y  \\
&=& 2 \iint\limits_{0<x<y<1} \sqrt{x^2+y^2}  \left\{
\int_0^{  1/y}  \sqrt{1+u^2} \d u 
\right\}     \d x \d y .
\end{eqnarray*}

Because
\begin{eqnarray*}
\int_0^{  1/y}  \sqrt{1+u^2} \d u  &=& \frac{1}{2} \left[   u \sqrt{1+u^2} 
+ \log(u + \sqrt{1+u^2})
 \right]_0^{1/y}  \\
&=& \frac{1}{2}   \left[   y^{-2} \sqrt{1+y^2} 
+ \log(y^{-1} + \sqrt{1+y^{-2}}) 
 \right] ,
\end{eqnarray*}
the volume $S_m$ becomes
\begin{eqnarray*}
V_m &=&  \iint\limits_{0<x<y<1} \sqrt{x^2+y^2}  \left[   y^{-2} \sqrt{1+y^2} 
+  \log(y^{-1} + \sqrt{1+y^{-2}}) 
 \right]   \d x \d y \\
 &=& \int_0^1   
 \left[   y^{-2} \sqrt{1+y^2} 
+ \log(y^{-1} + \sqrt{1+y^{-2}}) 
 \right]   
 \left(   \int_0^y   \sqrt{x^2+y^2}   \d x   \right) 
 \d y.
\end{eqnarray*}

Because
\begin{eqnarray*}
\int_0^y   \sqrt{x^2+y^2}   \d x &=& 
\frac{1}{2}\left[    
x\sqrt{x^2+y^2} + y^2 \log(x+\sqrt{x^2+y^2})
\right]_0^y \\
&=& \frac{1}{2}  \left[ 
y\sqrt{2y^2} + y^2\log(y+\sqrt{2y^2}) - y^2 \log y
\right]\\
&=& \frac{\sqrt{2}+\log(1+\sqrt{2})}{2} y^2 ,
\end{eqnarray*}
the volume of $S_m$ becomes
\begin{eqnarray*}
V_m&=&\frac{\sqrt{2}+\log(1+\sqrt{2})}{2}    \int_0^1   
 \left[     \sqrt{1+y^2} 
+ y^2 \log(y^{-1} + \sqrt{1+y^{-2} })  
  \right]
 \d y.
\end{eqnarray*}

Because of the following two integrals
\begin{eqnarray*}
\int_0^1 \sqrt{1+y^2} \d y 
&=&\frac{1}{2}  \left[   y\sqrt{1+y^2} + \log(y+\sqrt{1+y^2})   \right]_0^1  \\
&=& \frac{1}{2} [\sqrt{2} + \log(1+\sqrt{2})],\\
\int_0^1  y^2 \log( y^{-1} + \sqrt{1+y^{-2} } )  \d y 
&=& \frac{1}{6}  \left[  
y\sqrt{1+y^2} -\log(y+\sqrt{1+y^2}) + 2y^3\log( y^{-1} + \sqrt{1+y^{-2} } )
\right]_0^1\\
&=&\frac{1}{6} [\sqrt{2} + \log(1+\sqrt{2})],
\end{eqnarray*}
the volume $V_m$ is
$$
V_m =\left\{ \sqrt{2}+\log(1+\sqrt{2}) \right \}^2  \times \frac{1}{2}\times \left(  \frac{1}{2} + \frac{1}{6} \right)
=\frac{1}{3} \left\{ \sqrt{2}+\log(1+\sqrt{2})  \right\}^2.
$$

If the probabilities are bounded by general $p$, then the domain becomes
$$
\mathcal{F}_m(p) =  \{  (x,y,z):   0\leq x \leq p, 0\leq y\leq p, 0\leq z \leq \min( xp/y ,p)   \},
$$
and the volume becomes
$$
V_m(p) = 2 \iiint\limits_{\mathcal{F}_m (p)}   \sqrt{      (1+y^2/x^2) (1+z^2/x^2)       }    \d x\d y\d z .
$$
Applying the transformations $x'=x/p, y'=y/p$ and $z'=z/p$, the above integral becomes
$
V_m(p) = p^3 V_m(1).
$

The volume of the domain $\mathcal{F}_m(1)$ is
\begin{eqnarray*}
F_m(1) = \iiint\limits_{\mathcal{F}_m} \d x\d y\d z  
=  \int_0^1\int_0^1 \int_{yz}^1  \d x \d y\d z 
= \int_0^1\int_0^1 (1-yz) \d y \d z 
= \int_0^1 \left(1- \frac{z}{2} \right) \d z = 1-\frac{1}{4} = \frac{3}{4}.
\end{eqnarray*}
For general $p$, similar to the discussion in Section \ref{sec::a-a}, the volume of $\mathcal{F}_m(p)$ is $F_m(p) = 3/4\times p^3$.

\section{Computing $F_o(p)$ and $V_o(p)$}\label{sec::a-o}

Again we use $(x,y,z,w) = (p_{11}, p_{10}, p_{01}, p_{00} ) $. Define $h(u) = u/(1-u)$ and $g(c) = c/(1+c)$, with derivatives $ h'(u)  = 1/(1-u)^2$ and $g '(c) = 1/(1+c)^2.$ The set $S_o$ represents a mapping from a three dimensional set to a four dimensional space, written as 
$$  
\bm{f}_o(x,y,z) = \left(x,y,z, w=  g\left(  \frac{h(y)h(z)}{h(x)}  \right)   \right)^\top.
$$

First, we need find the domain of the mapping $\bm{f}_o.$ Because $h(x),h(y),h(z), h(w) =   h(y)h(z) / h(x)    \in (0, h(p))$, the domain of $\bm{f}_o$ is 
$$
\mathcal{F}_o(p) = 
\left\{  (x,y,z) :     \frac{(1-p)yz}{   (1-p)yz  +p(1-y)(1-z)  }  <  x < p, 0< y < p, 0< z < p    \right\}  .
$$

For notational simplicity, define
\begin{eqnarray}
k(x,y,z) &=& g ' \left(  \frac{h(y)h(z)}{h(x)}  \right) \frac{h(y)h(z)}{h(x)} 
=   \left( 1 +  \frac{h(y)h(z)}{h(x)}  \right)^{-2} \frac{h(y)h(z)}{h(x)}   =  \frac{h(x)h(y)h(z)}{     \{  h(x) + h(y)h(z)  \}^2     } , \nonumber\\
&&\label{eq::k}\\
l(u) &=&  \d ~ \log h(u) / \d u = \{ u(1-u) \}^{-1}  \label{eq::l}.
\end{eqnarray} 
In the following calculation, we write $k=k(x,y,z), h_x = h(x), h_y = h(y), h_z = h(z),  l_x=l(x), l_y = l(y)$ and $l_z = l(z).$

Second, we need to find the infinitesimal volume spanned by the following three vectors corresponding to the directions of $(\d x,\d y,\d z)$:
\begin{eqnarray}\label{eq::basis-odds}
{ \partial \bm{f}_o \over  \partial x } \d x  = 
\begin{pmatrix}
1\\
0\\
0\\
- kl_x
\end{pmatrix}\d x, \quad 
{ \partial \bm{f}_o \over  \partial y }  \d y  = 
\begin{pmatrix}
0\\
1\\
0\\
kl_y
\end{pmatrix}\d y,\quad 
{ \partial \bm{f}_o \over  \partial z } \d z  =
\begin{pmatrix}
0\\
0\\
1\\
kl_z
\end{pmatrix}\d z . 
\end{eqnarray}
 
Let 
$$
A_o(x,y,z) = \left[{ \partial \bm{f}_m \over  \partial x }   , { \partial \bm{f}_m \over  \partial y } ,  { \partial \bm{f}_m \over  \partial z } \right] =
\begin{pmatrix}
1&0&0\\
0&1&0\\
0&0&1\\
- k l_x& k l_y & kl_z 
\end{pmatrix}.
$$
Lemma \ref{lemma::volume} implies that the infinitesimal volume spanned by the vectors in (\ref{eq::basis-odds}) is
\begin{eqnarray*}
&&v_o(x,y,z) \d x\d y\d z \\
&=&\sqrt{ \det\{ A_o^\top(x,y,z) A_o(x,y,z) \}   } \d x \d y \d z \\
&=& \sqrt{    \det \left\{        
\begin{pmatrix}
1&0&0& - k l_x\\
0&1&0& k l_y\\
0&0&1& k l_z
\end{pmatrix}
\begin{pmatrix}
1&0&0\\
0&1&0\\
0&0&1\\
- k l_x& k l_y & kl_z 
\end{pmatrix} \right\} 
     }   \d x \d y \d z \\
&=& \sqrt{   \det       
\begin{pmatrix}
1+k^2l_x^2 & -k^2l_xl_y & -k^2 l_x l_z \\
-k^2 l_x l_y & 1+ k^2l_y^2 & k^2 l_y l_z \\
-k^2 l_xl_z & k^2l_yl_z & 1+k^2l_z^2
\end{pmatrix}
 }   \d x \d y \d z \\
&=& \sqrt{     1+k^2 (l_x^2 + l_y^2 + l_z^2)      }      \d x \d y \d z \\
&=&  \sqrt{     1+       \frac{ h_x^2 h_y^2 h_z^2  (l_x^2 + l_y^2 + l_z^2)  }{    (h_x + h_y h_z)^4   }   }      \d x \d y \d z .
\end{eqnarray*}

Using the definitions in (\ref{eq::k}) and (\ref{eq::l}), we express the three dimensional volume of $S_o$ as
\begin{eqnarray*}
V_o (p)&=&  \iiint\limits_{ \mathcal{F}_o(p) } v_o(x,y,z) \d x\d y\d z \\ 
&=&  \iiint\limits_{ \mathcal{F}_o(p) }  \sqrt{     1+       
\frac{  x^2(1-x)^2y^2(1-y)^2 +   y^2(1-y)^2z^2(1-z)^2 + x^2(1-x)^2z^2(1-z)^2        }{   \{   x(1-y)(1-z) + (1-x)yz  \}^4      }
   }      \d x \d y \d z .
\end{eqnarray*}

The volume of the domain $\mathcal{F}_o(1)$ is $F_o(1) = 1.$
For general $p$, the volume 
$
\mathcal{F}_o(p)
$
is
$$
F_o(p) =  \iiint\limits_{ \mathcal{F}_o(p) }  \d x\d y\d z,
$$
which is complex, but can be evaluated numerically.

\end{document}